\documentclass[10pt,a4paper,reqno]{amsart}            

\usepackage[english]{babel}
\usepackage{amssymb,amsmath,amsthm}
\usepackage{mathrsfs}
\usepackage{enumitem}
\usepackage[immediate,debrief]{silence}
\WarningFilter{hyperref}{Token not allowed in a PDF string}
\usepackage[pagebackref]{hyperref}
\makeatletter
\pdfstringdefDisableCommands{\let\@latex@error\@gobbletwo}
\makeatother

\usepackage{color}

\usepackage{scrtime}

\hypersetup{
 colorlinks,
 linkcolor={blue!90!black},
 citecolor={red!80!black},
 urlcolor={blue!50!black}
}
\usepackage{tikz}
\usepackage{tikz-cd}
\usepackage{graphicx}
\usetikzlibrary{shapes.geometric,arrows,fit,matrix,positioning,trees,snakes}
\tikzset
{
    treenode/.style = {circle, draw=black, align=center, minimum size=1cm},
    subtree/.style  = {isosceles triangle, draw=black, align=center, minimum height=0.5cm, minimum width=1cm, shape border rotate=90, anchor=north}
}

\renewcommand\mathcal{\mathscr}
\baselineskip=12pt
\hfuzz 2pt
\vfuzz 2pt
\parskip=4pt

\theoremstyle{plain}
\newtheorem{theorem}{Theorem}[section]
\newtheorem*{theorem*}{Theorem}
\newtheorem{lemma}[theorem]{Lemma}
\newtheorem*{lemma*}{Lemma}

\newtheorem{proposition}[theorem]{Proposition}

\theoremstyle{remark}
\newtheorem{remark}[theorem]{Remark}
\newtheorem*{remark*}{Remark}

\theoremstyle{definition}

\newtheorem*{definition*}{Definition}

\theoremstyle{example}

\newtheorem*{example*}{Example}

\numberwithin{equation}{section}

\newcount\quantno
\everydisplay{\quantno=0}\everycr{\quantno=0}
\newcommand\quant{\advance\quantno by1
                      \ifnum\quantno=1\qquad\else\quad\fi\forall }
\newcommand\itemno[1]{(\romannumeral #1)}

\newcommand\rest[1]{\kern-.1em
          \lower.5ex\hbox{$\scriptstyle #1$}\kern.05em}

\renewcommand\mod[1]{\vert{#1}\vert}
\newcommand\bigmod[1]{\bigl\vert{#1}\bigr|}
\newcommand\Bigmod[1]{\Bigl\vert{#1}\Bigr|}

\newcommand\norm[2]{{\Vert{#1}\Vert_{#2}}}
\newcommand\normto[3]{{\Vert{#1}\Vert_{#2}^{#3}}}
\newcommand\bignorm[2]{\left.{\bigl\Vert{#1}\bigr\Vert_{#2}}\right.}
\newcommand\bignormto[3]{\left.{\bigl\Vert{#1}\bigr\Vert_{#2}^{#3}}\right.}
\newcommand\Bignorm[2]{\left.{\Bigl\Vert{#1}\Bigr\Vert_{#2}}\right.}

\newcommand\bigopnorm[2]{\big|\!\big|\!\big| {#1} \big|\!\big|\!\big|_{#2}}

\newcommand\wrt{\,\text{\rm d}}

\newcommand\BN{\mathbb{N}}

\newcommand\BZ{\mathbb{Z}}

\newcommand\cB{\mathcal{B}}

\newcommand\cF{\mathcal{F}}   
 
\newcommand\cG{\mathcal{G}}   
 
  \newcommand\fH{\mathfrak{H}}

\newcommand\cK{\mathcal{K}}    
    
\newcommand\cM{\mathcal{M}}     
\newcommand\cN{\mathcal{N}}

  \newcommand\fS{\mathfrak{S}}  
\newcommand\cT{\mathcal{T}}  \newcommand\fT{\mathfrak{T}}  
\newcommand\cU{\mathcal{U}}

\newcommand\al{\alpha}
\newcommand\be{\beta}
\newcommand\ga{\gamma}    
\newcommand\de{\delta}

\newcommand\la{\lambda}   
\newcommand\om{\omega}    \newcommand\Om{\Omega}

\newcommand\funnyk{k\hbox to 0pt{\hss\phantom{g}}}

\newcommand\lu[1]{L^1(#1)}
\newcommand\lp[1]{L^p(#1)}

\newcommand\ly[1]{L^\infty(#1)}

\newcommand\wt{\widetilde}
\newcommand\whH{\widehat{\phantom{G}}\hbox to 0pt{\hss $H$}}

\newcommand\emspace{\hbox to 6pt{\hss}}
\newcommand\ds{\displaystyle}

\newcommand\rmi{\hbox{\rm (i)}}
\newcommand\rmii{\hbox{\rm (ii)}}
\newcommand\rmiii{\hbox{\rm (iii)}}

\newcommand\One{{\mathbf{1}}}

\newcommand\diam{\mathrm{diam}}

\DeclareSymbolFont{EUEX}{U}{euex}{m}{n}

\DeclareSymbolFont{euexlargesymbols}{U}{euex}{m}{n}
\DeclareMathSymbol{\intop}{\mathop}{euexlargesymbols}{"52}
     \def\int{\intop\nolimits}

\DeclareSymbolFont{euexsymbols}     {U}{euex}{m}{n}
\DeclareMathSymbol{\smallint}{\mathop}{euexsymbols}{"52}

\begin{document}

\title[Triangular maximal operators]{Triangular maximal operators \\ on locally finite trees 
}


\keywords{Locally finite trees, centred and uncentred triangular maximal operators, Hardy--Littlewood maximal operators}

\thanks{
Santagati is a member of the Gruppo Nazionale per l'Analisi Matematica, la Probabilit\`a e le loro Applicazioni (GNAMPA) of the
Istituto Nazionale di Alta Matematica (INdAM)
}

\author[S. Meda]{Stefano Meda}
\address[Stefano Meda]{Dipartimento di Matematica e Applicazioni \\ Universit\`a di Milano-Bicocca\\
via R.~Cozzi 53\\ I-20125 Milano\\ Italy}
\email{stefano.meda@unimib.it}

\author[F.\ Santagati]{Federico Santagati} \address[Federico Santagati]{Dipartimento di Matematica e Applicazioni \\ Universit\`a di Milano-Bicocca\\
via R.~Cozzi 53\\ I-20125 Milano\\ Italy}
\email{federico.santagati@unimib.it}

\begin{abstract}
	We introduce the centred and the uncentred triangular maximal operators~$\cT$ and $\cU$, respectively, on any locally finite tree in which 
	each vertex has at least three neighbours.  We prove that both $\cT$ and $\cU$ are bounded on~$L^p$ for every $p$ in $(1,\infty]$, that
	$\cT$ is also bounded on $\lu{\fT}$, and that~$\cU$ is not of weak type $(1,1)$ on homogeneous trees.    
	Our proof of the~$L^p$ boundedness of~$\cU$ hinges on the geometric approach of A.~C\'ordoba and R.~Fefferman. 
	We also establish $L^p$ bounds for some related maximal operators.  
	
	Our results are in sharp contrast with the fact that the centred and the uncentred Hardy--Littlewood maximal operators (on balls)
	may be unbounded on~$L^p$ for every $p<\infty$ even on some trees where the number of neighbours is uniformly bounded. 

\end{abstract}


\maketitle

\section{Introduction} \label{s: Introduction}

The centred and the uncentred Hardy--Littlewood maximal operators on a metric measure space $(X,d,\mu)$ are defined by  
\begin{equation} \label{f: HLMO}
\cM f(x)
:= \sup_{r>0} \, \frac{1}{\mu\big(B_r(x)\big)} \, \int_{B_r(x)}\!\! \mod{f} \wrt \mu
\quad\hbox{\textrm{and}}\quad 
\cN f(x)
:= \sup_{B \ni x} \, \frac{1}{\mu(B)} \, \int_{B}  \mod{f} \wrt \mu,
\end{equation}
respectively; here 
$B_r(x)$ denotes the ball with centre $x$ and radius $r$, and  $B$ is any ball in $X$ containing $x$.  

It is well known that if the measure $\mu$ is \textit{doubling}, i.e. if there exists a constant~$D$ such that 
\begin{equation} \label{f: doubling}
	\mu\big(B_{2r}(x)\big) \leq D\, \mu\big(B_{r}(x)\big)
\end{equation}
for every $x$ in $X$ and for all $r>0$, then 
$\cM$ and~$\cN$ are of weak type $(1,1)$ and bounded on $\lp{X}$ for every~$p$ in $(1,\infty]$ (see, for instance, \cite[Chapter~1]{St}).  

If, instead, $\mu$ is nondoubling, viz. the condition~\eqref{f: doubling} fails, then a variety of situations can occur.  
For instance, on symmetric spaces of the noncompact type J.-O.~Str\"omberg \cite{Str} proved that $\cM$ is bounded on $L^p$ for all $p>1$ 
and it is of weak type $(1,1)$, and A.D.~Ionescu \cite{I} showed that $\cN$ is bounded on $L^p$ if and only if $p>2$.  

These results have been complemented by H.-Q.~Li \cite{L1}, who showed that given~$p_0$ in $(1,2)$, there is a nondoubling Riemannian manifold, 
which is a generalisation of the hyperbolic space, where $\cM$ bounded on~$L^p$ if and only if $p$ belongs to the interval $(p_0,\infty]$.  
Furthermore there are Riemannian manifolds of the same type where~$\cM$ is bounded on~$L^p$ if and only if $p=\infty$.  Similar results for $\cN$ 
are contained in \cite{L2}.  See also~\cite{K} and the references therein for simple examples of nondoubling metric measure spaces where~$\cM$ 
and $\cN$ have similar boundedness properties on $L^p$ spaces.   

%
%
%

In this paper we focus on trees: $\fT$ will denote a tree in which every vertex~$x$ has a finite number $\nu(x)\geq 3$ of neighbours.  We emphasize that
the function~$\nu$ may be unbounded on $\fT$, in which case we say that the locally finite tree $\fT$ has \textit{unbounded geometry}.
We endow $\fT$ with the natural graph distance $d$ and the set of its vertices with the counting measure $\mu$.  
For notational convenience, we write~$\mod{E}$ instead of $\mu(E)$ for any subset $E$ of~$\fT$.   

The metric measure space $(\fT,d,\mu)$ has exponential volume growth.  If $\nu$ is bounded, then $\mu$ is locally, but not globally, doubling;
if $\nu$ is unbounded, then~$\fT$ is not even locally doubling.  

In this context, various authors have considered the problem of establishing $L^p$ bounds for $\cM$ and $\cN$.  Notice that the definition
of $\cM$ is usually modified as follows
$$
\cM f(x)
:= \sup_{r\in \BN} \, \frac{1}{\mod{B_r(x)}} \, \int_{B_r(x)} \mod{f} \wrt \mu;
$$
here 
$B_r(x) := \{y\in \fT: d(x,y) \leq r\}$. 
Examples show that the range of $p$'s where either $\cM$ or $\cN$ are bounded on $\lp{\fT}$ 
may depend on the bounds of $\nu$.   

Here is a brief account of some relevant contributions in the literature concerning the $L^p$ boundedness of $\cM$ and $\cN$.  
Recall that a tree where $\nu$ is constant is called \textit{homogeneous}: we denote by $\fT_b$ the tree for which $\nu=b+1$ for some $b\geq 2$.  
A.~Naor and T.~Tao \cite[Theorem~1.5]{NT} and, independently, M.~Cowling, Meda and A.~Setti \cite[Theorem~3.1]{CMS1} proved
that $\cM$ is bounded on $\lp{\fT_b}$, $1<p\leq \infty$, and of weak type $(1,1)$ (see also \cite{RT}).   
A.~Veca~\cite[Theorem~5.1]{V} proved that $\cN$ is bounded on $\lp{\fT_b}$, $2<p\leq \infty$, and of restricted weak type $(2,2)$
(see also the recent work \cite{LS} for results concerning related maximal operators).    

Generalisations of these results to trees $\fT$ where $\nu$ is bounded, but not constant, have been the object of the investigations in \cite{LMSV}.  
In particular, it is shown that if $3\leq a+1\leq \nu \leq b+1$ and $b\leq a^2$, then the precise form of the Kunze--Stein phenomenon on $\fT_b$ 
(see \cite{CMS2}) implies that $\cM$ is bounded on $\lp{\fT}$, $\tau<p\leq \infty$, where $\tau = \log_ab$, 
and it is of restricted weak type $(\tau,\tau)$, and the result is sharp.  If, instead, $b>a^2$, then there are examples of trees in 
this class for which~$\cM$ is unbounded on $L^p$ for every $p<\infty$.
Even more strikingly, whenever $b>a$ there are trees in this class for which~$\cN$ is unbounded on $L^p$ for every $p<\infty$.  

Extensions of some of these results to graphs are contained in \cite{ST}.
We refer the interested reader to the introduction of the paper \cite{LMSV} for additional comments on related works in the literature.  


The abovementioned results concerning the boundedness of $\cM$ and $\cN$ on trees raise the question whether there are natural ``geometric'' maximal 
operators on locally finite trees with possibly unbounded geometry that possess stable $L^p$ boundedness properties, in the sense that the range of $p$'s 
for which they are bounded on~$L^p$ do not depend on the specific assumptions on $\nu$, besides the condition $\nu\geq 3$.    

In this paper we answer in the affirmative to this question and propose to investigate the $L^p$ boundedness of the centred and uncentred
maximal operators on triangles.  

Their definition requires a bit of notation, which we now introduce.  
%
We fix a geodesic ray $\om = \{x_m: m \in \BN\}$ in $\fT$, and consider the associated \textit{height function} $h_\om$, 
which is the discrete analogue of the Busemann function in Riemannian geometry, defined by
$$
h_\om(x) = \lim_{m\to \infty} \big(m - d(x,x_m)\big).  
$$
Notice that $h_\om$ is integer valued.   Its level sets, called \textit{horocycles} associated to $\om$,
are then defined, for $j$ in~$\BZ$, by
$$
\fH_j^\om
:= \big\{x\in \fT : h_\om(x)=j\big\},
$$
and $\ds \fT = \bigcup_{j\in \BZ} \, \fH_j^\om$ (disjoint union).
Notice that if $x\in \fH_j^\om$, then $\nu(x)-1$ neighbours of~$x$, called \textit{successors} of~$x$, 
belong to $\fH_{j-1}^\om$.  
We denote by $s_\om^1(x)$ the set of \textit{successors} of~$x$, 
define $s_\om^0(x) := \{x\}$, and  
$$
s_\om^k(x) 
:= \bigcup_{y\in s_\om^{k-1}(x)} \, s_\om^1(y),
\qquad k\geq 2.
$$   
For every nonnegative integer $R$, we call $\ds T_R^\om(x) := \bigcup_{j=0}^R \, s_\om^j(x)$ the {triangle with vertex $x$ and height~$R$}.  
The \textit{centred and uncentred triangular maximal operators}~$\cT^\om$ and~$\cU^\om$ are then defined by  
$$
\cT^\om f(x) 
:= \sup_{R\geq 0}  \, \frac{1}{\mod{T_R^\om(x)}} \, \int_{T_R^\om(x)} \mod{f} \wrt \mu
\quad\hbox{and}\quad
\cU^\om  f(x)
:= \sup_{T \ni x} \, \frac{1}{\bigmod{T}} \, \int_{T} \bigmod{f} \wrt \mu,
$$
respectively, where $T$ is any triangle in $\fT$.

Our main result states that if $\fT$ is a locally finite tree with $\nu \geq 3$, then $\cT^\om$ and $\cU^\om$ are bounded on $\lp{\fT}$ for every
$p$ in $[1,\infty]$ and for every $p$ in $(1,\infty]$, respectively.  Furthermore, $\cU^\om$ is not of weak type $(1,1)$ even on the homogeneous tree $\fT_b$,
$b\geq 2$.   
 
The operators $\cT^\om$ and $\cU^\om$ depend on $\om$, but their $L^p$ boundedness properties do not, in the sense that if we fix another 
geodesic ray $\om'$, then the corresponding maximal operators $\cT^{\om'}$ and $\cU^{\om'}$ share with $\cT^\om$ and $\cU^\om$, respectively, 
the same~$L^p$ boundedness properties.  Thus, for simplicity, in the sequel we shall omit the superscripts and write~$\cT$ and $\cU$ instead 
of $\cT^\om$ and $\cU^\om$.  

The proof of the $L^p$ boundedness of $\cT$ is not hard, and can be found in Section~\ref{s: Maximal}, where we also study the related centred
and noncentred maximal functions~$\cB$ and~$\cB^u$.  

Our approach to the problem of determining the range of $p$'s where $\cU$ is bounded is much in the spirit of the work of A.~C\'ordoba and 
R.~Fefferman \cite{CF}.  In Section~\ref{s: The uncentred} we show that for every $r$ in $[1,\infty)$ there exists a constant~$A_r$ such that 
for any finite collection $\cG$ of triangles in $\fT$ that are maximal with respect to inclusion the following holds
\begin{equation} \label{f: CF inequality}
	\Bignorm{\sum_{T\in\cG} \One_T}{r}
	\leq A_r \bignorm{\One_G}{r},
\end{equation}
where $G$ denotes the union of all $T$ in $\cG$.   Loosely speaking, this estimate says that the triangles in~$\cG$ have ``finite overlapping
in the~$L^r$ norm''.  
We mention that Ionescu~\cite{I} has used a similar strategy to obtain 
bounds for the uncentred HL maximal operator on symmetric spaces of the noncompact type and rank $\geq 2$.

In Section~\ref{s: Further} we show that \eqref{f: CF inequality} fails for every $r$ in $(1,\infty)$ if we replace the family~$\cG$ above with a family $\cG'$ 
of \textit{modified} maximal triangles $T'$, where $T'$ is the union of a triangle $T$ of height $h$ and 
the $h^{\mathrm {th}}$ ancestor of the vertex of~$T$.  This implies that the uncentred HL maximal operator associated to the family $\cG'$ is unbounded on 
$\lp{\fT}$ for every $p<\infty$.  The reason for which \eqref{f: CF inequality} fails lies in the fact that a point~$x$ can be the~$h^{\mathrm {th}}$ 
ancestor of the vertices of a lot of mutually disjoint triangles of height~$h$, which makes the left hand side, but not the right hand side, 
of \eqref{f: CF inequality} big.  See the observation after Remark~\ref{rem: counterex infinite} for the details.  


\section{Preliminaries}
\label{s: Preliminaries}

Let $\fT$ be a locally finite tree, i.e. a connected graph with no loops, in which every vertex $x$ has a finite number $\nu(x)\geq 3$ of neighbours; we call 
$\nu(x)$ the \textit{valence} of $x$.   

Between any two points $x$ and $y$ in $\fT$, such that $d(x,y)=n$, there is a unique \textit{geodesic path} of the form
$x_0,x_1,\ldots,x_n$, where $x_0=x$, $x_n=y$, and $d(x_i,x_j)=\mod{i-j}$ whenever $0\leq i,j \leq n$.
A \textit{geodesic ray} $\ga$ in~$\fT$ is a one-sided sequence $\{\ga_n:n\in\BN\}$ of points of~$\fT$ such that
$d(\ga_i,\ga_j)=\mod{i-j}$ for all nonnegative integers $i$ and $j$.  We say that {\it $x$ lies on~$\ga$}, and write $x\in \ga$, if
$x=\ga_n$ for some $n$ in~$\BN$. 
Given a point $y$, denote by $y\wedge \om$ the point on $\om$ closest to $y$ ($\om$ is as in the Introduction).  Suppose that $y\wedge \om = x_k$, 
and denote by $\ga_y$ the geodesic ray $[y,x_k]\cup [x_k,x_{k+1},\ldots]$.  Given another point $x$ in $\fT$, we say that 
$x$ lies above~$y$, and write $x\succeq y$, if $x\in \ga_y$.  If $x\succeq y$ and $x\neq y$, then we write $x\succ y$. 

Given a tree $\fT$, we implicitly assume that we have chosen 
a geodesic ray $\om$ in $\fT$. 
Many objects on $\fT$ depend on $\om$.   However, in order to simplify the notation, 
we do not stress this dependence, and write $h$, $T_R(x)$, $s^k(x)$, $\cT$ and $\cU$ in place of 
$h_\om$, $T_R^\om(x)$, $s_\om^k(x)$, $\cT^\om$ and $\cU^\om$.    

We agree that the triangle with vertex $x$ and height $0$ is just the point $x$.  If $T$ is any triangle, then we denote by $v(T)$, 
$h(T)$ and $\be(T)$ its vertex, its height and its base, respectively.  Note that $\be(T) = s^{h(T)}\big(v(T)\big)$.  

Let $x$ be a vertex in $\fT$.  We denote by $p(x)$ the \textit{predecessor} of~$x$, viz. the unique neighbour of $x$ with height $h(x) + 1$.  
Notice that $p(x)$  depends on the choice of~$\om$: in order to simplify the notation, we do not stress this dependence.  
Note that $p\big(p(x)\big)$, also denoted $p^2(x)$, is just a vertex in $\fH_{h(x)+2}$.  The~$k^{\mathrm{th}}$ \textit{ancestor} of~$x$ is the point 
$p^k(x) := p\big(p^{k-1}(x)\big)$.  
For any subset $E$ of $\fT$ and every positive integer $k$, $p^k(E)$ will be short for $\ds \bigcup_{y\in E} p^k(y)$.  

The next lemma contains an elementary inequality relating the area of any triangle in $\fT$ and the length of its base.  Such inequality 
can also be deduced from Cheeger's isoperimetric inequality on trees, for which we refer the reader to \cite[Lemma~13]{RT} and \cite[Theorem~4.2.2]{Wo}.  

\begin{lemma}\label{l: triangles and bases}
	Suppose that $\fT$ is a locally finite tree with $\nu\geq 3$, and let $T$ be a triangle in $\fT$ with height $h$.  The following hold:
	\begin{enumerate}
		\item[\itemno1]
			$2^{k}\, \bigmod{p^{k}\big(\be(T)\big)} \leq \mod{\be(T)}$ for every $k$ in $\{0,\ldots, h\}$;
		\item[\itemno2]
			$\mod{T} \leq 2\, \bigmod{\be(T)}$.
	\end{enumerate}
\end{lemma}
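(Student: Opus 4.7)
The plan hinges on a single structural observation about the way successors generate triangles. Write $v=v(T)$ and $h=h(T)$, so that $T=\bigcup_{i=0}^{h}s^i(v)$ and $\be(T)=s^h(v)$. The key point is that for every $i\in\{0,\ldots,h\}$ the $(h-i)$-th ancestor map sends $s^h(v)$ surjectively onto $s^i(v)$, hence
\[
p^{h-i}\bigl(\be(T)\bigr)=s^i(v),
\qquad\text{equivalently}\qquad
p^k\bigl(\be(T)\bigr)=s^{h-k}(v).
\]
This is immediate from the definitions of ancestor and successor along the horocyclic foliation.

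For part \itemno1 I would then exploit the assumption $\nu\geq 3$, which gives that every vertex has at least $\nu(\cdot)-1\geq 2$ successors. Iterating this bound, any vertex $y\in s^{h-k}(v)$ satisfies $\bigmod{s^{k}(y)}\geq 2^k$, and the sets $\{s^k(y):y\in s^{h-k}(v)\}$ partition $s^h(v)$. Summing yields
\[
\bigmod{\be(T)}=\bigmod{s^h(v)}=\sum_{y\in s^{h-k}(v)}\bigmod{s^k(y)}\geq 2^k\,\bigmod{s^{h-k}(v)}=2^k\,\bigmod{p^k\bigl(\be(T)\bigr)},
\]
which is \itemno1.

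For part \itemno2 I would just sum the estimate from \itemno1 over $k$. Since $|s^i(v)|=|p^{h-i}(\be(T))|\leq 2^{i-h}|\be(T)|$ by applying \itemno1 with $k=h-i$, it follows that
\[
\bigmod{T}=\sum_{i=0}^{h}\bigmod{s^i(v)}\leq \bigmod{\be(T)}\sum_{i=0}^{h}2^{i-h}=\bigmod{\be(T)}\sum_{j=0}^{h}2^{-j}\leq 2\,\bigmod{\be(T)}.
\]

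There is no real obstacle here: the argument is essentially the elementary geometric-series bound for a binary-branching subtree, and the hypothesis $\nu\geq 3$ enters only through the guaranteed factor of $2$ at each generation. The only point that deserves a line of justification is the identification $p^k(\be(T))=s^{h-k}(v)$, which is where the definition of the horocyclic predecessor is implicitly used.
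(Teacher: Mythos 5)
Your proof is correct and follows essentially the same route as the paper's: both rest on the observation that each vertex has at least two successors, so the cardinalities of the ancestor sets $p^k(\be(T))=s^{h-k}(v(T))$ at least double at each generation, and part \itemno2 then follows from the geometric series $\sum_k 2^{-k}\leq 2$. The only cosmetic difference is that you obtain \itemno1 by partitioning $\be(T)$ into the sets $s^k(y)$, $y\in s^{h-k}(v)$, each of size at least $2^k$, whereas the paper iterates the one-step inequality $\bigmod{p^{k-1}(\be(T))}\geq 2\,\bigmod{p^{k}(\be(T))}$.
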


\begin{proof}
	Since every point in $p^{k}\big(\be(T)\big)$ has at least two successors, 
	$$
	\bigmod{p^{k-1}\big(\be(T)\big)} \geq 2\, \bigmod{p^{k}\big(\be(T)\big)}.
	$$
	Then \rmi\ follows by iterating this estimate.  

	Next,
	$$ 
	\mod{T} 
	=    \sum_{k=0}^{h(T)} \, \bigmod{p^k\big(\be(T)\big)} 
	\leq \sum_{k=0}^{h(T)} \, 2^{-k} \, \bigmod{\be(T)}  
	\leq 2\, \, \bigmod{\be(T)},  
	$$
	and \rmii\ follows.  
%
\end{proof}

\section{The centred triangular maximal operator}
\label{s: Maximal}

In this section we study the centred triangular maximal operator $\cT$ defined in the Introduction, and some related maximal operators.  

\begin{theorem}[Centred triangular maximal function] \label{t: cT}
	Suppose that $\fT$ is a tree such that $\nu \geq 3$.  Then $\cT$ is bounded on $\lp{\fT}$ for every $p$ in $[1,\infty]$.
\end{theorem}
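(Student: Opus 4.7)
The plan is to establish the two endpoint bounds $\cT:L^\infty\to L^\infty$ and $\cT:L^1\to L^1$ and then obtain the full range $p\in(1,\infty)$ by Marcinkiewicz interpolation (applicable since $\cT$ is sublinear). The $L^\infty$ estimate is immediate from the definition, since every average of $|f|$ over a triangle is bounded by $\norm{f}{\infty}$.

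The real content is the $L^1$ bound, which I would obtain by linearising $\cT$. For each $x\in\fT$ choose $R(x)\ge 0$ with
$$\cT f(x)\le 2\,\frac{1}{|T_{R(x)}(x)|}\sum_{y\in T_{R(x)}(x)}|f(y)|.$$
Summing over $x$ and exchanging the order of summation reduces the problem to the pointwise bound
$$\sum_{x\,:\, y\in T_{R(x)}(x)}\frac{1}{|T_{R(x)}(x)|}\lesssim 1,\qquad y\in\fT.$$
Since $y\in T_{R(x)}(x)$ forces $x=p^j(y)$ for some $j\in\{0,\ldots,R(x)\}$, the inner sum becomes a sum over integers $j\ge 0$ satisfying $R(p^j(y))\ge j$, with summands $|T_{R(p^j(y))}(p^j(y))|^{-1}$.

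The key geometric input, and the main thing to verify, is the lower bound $|T_R(x)|\ge 2^R$ for every $x\in\fT$ and every $R\ge 0$. This is essentially Lemma~\ref{l: triangles and bases}\itemno1 applied with $k=h(T)=R$: since $v(T)=p^R(\be(T))$ is a single point, item \itemno1 gives $|\be(T)|\ge 2^R$, whence $|T_R(x)|\ge 2^R$. Plugging this bound in at $x=p^j(y)$ dominates each summand of the inner sum by $2^{-j}$, and summing the geometric series $\sum_{j\ge 0}2^{-j}=2$ yields $\norm{\cT f}{1}\le 4\norm{f}{1}$. The overall argument is the Str\"omberg-type observation that on exponentially growing structures a \emph{centred} maximal operator selects triangles whose selecting family automatically has finite $\ell^1$-overlap; I do not foresee a serious obstacle beyond the bookkeeping of ancestors outlined above.
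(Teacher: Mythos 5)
Your proof is correct and follows essentially the same route as the paper: both arguments reduce the $L^1$ bound to showing that the sum over the ancestors $p^j(y)$ of $|T_{R}(p^j(y))|^{-1}$ is controlled by $\sum_{j\ge 0}2^{-j}$, using the volume lower bound $|T_R(x)|\ge 2^R$ from Lemma~\ref{l: triangles and bases}, and then interpolate with the trivial $L^\infty$ estimate. The paper phrases this by dominating $\cT$ with the integral operator whose kernel is $\sup_R \One_{T_R(x)}(y)/|T_R(x)|$ rather than by linearising, but this is only a cosmetic difference.
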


\begin{proof}
	Define the function $\tau: \fT\times\fT \to [0,\infty)$ by
	$$
	\tau(x,y)
	:= \frac{1}{\bigmod{T_{d(x,y)}(x)}}\, \One_E (x,y),
	$$
	where $E := \big\{(x,y) \in \fT\times \fT: x\succeq y\big\}$.   Observe that 
	$$
		\cT f(x) 
		\leq \int_\fT \, \sup_{R\in \BN} \, \frac{\One_{T_R(x)}}{\bigmod{T_R(x)}} \, \mod{f} \wrt \mu 
		\leq \int_\fT \, \tau(x, \cdot) \, \mod{f} \wrt \mu.   
	$$
	Therefore 
	$$
		\bignorm{\cT f}{1}
		\leq \int_\fT \wrt \mu(x) \int_\fT \, \tau(x,y) \, \bigmod{f(y)} \wrt \mu(y) 
		\leq A \bignorm{f}{1},  
	$$
	where $\ds A := \sup_{y\in \fT} \, \int_\fT \, \tau(x,y) \wrt \mu(x)$.  Now, given $y$ in $\fT$, the points $x$ for which $\tau(x,y) \neq 0$
	are just the points on the geodesic $[y,\om)$, i.e. the points $y, p(y), p^2(y), \ldots$  Thus
	$$
	A
	= \sup_{y\in \fT} \,\, \sum_{k=0}^\infty \,\, \frac{1}{\bigmod{T_k(p^k(y))}} 
	\leq  \sum_{k=0}^\infty \,\, 2^{-k}
	= 2.  
	$$
	This proves that $\bigopnorm{\cT}{1;1} \leq 2$.  Since $\cT$ is obviously bounded on $\ly{\fT}$, the Marcinkiewicz interpolation theorem
	implies that $\cT$ is bounded on $\lp{\fT}$ for every~$p$ in $[1,\infty]$. 
\end{proof}

An examination of the proof above shows that the assumption $\nu\geq 3$ can be substantially relaxed.  In fact, it suffices to assume that 
$\nu \geq 2$, and that there exists a constant $\la$ in $(0,1)$ such that 
$$
\frac{\bigmod{s^{k-1}(x)}}{\bigmod{s^k(x)}} 
\leq \la
\quant k\geq 1 \quant x \in \fT.
$$

For each function $f$ on $\fT$, define the \textit{centred} and the \textit{uncentred } maximal functions $\cB f$ and $\cB^u f$ by 
$$
\cB f(x)
:= \sup_{r\in \BN} \, \frac{1}{\mod{s^r(x)}} \, \int_{s^r(x)} \!\!\!\mod{f} \wrt \mu
\quad\hbox{and}\quad
\cB^u \! f(x)
:= \sup_{T \ni x} \, \frac{1}{\bigmod{\be(T)}} \, \int_{\be(T)} \bigmod{f} \wrt \mu.  
$$
Clearly $\cB f \leq \cB^u f$.   By Lemma~\ref{l: triangles and bases}~\rmii, applied to $T_r(x)$, $r\geq 0$, 
\begin{equation} \label{f: cS leq cT}
\cB f (x)
	\leq \sup_{T: v(T) = x} \, \frac{2}{\mod{T}} \, \int_{T} \, \mod{f} \wrt \mu
\leq 2 \, \cT \! f (x).   
\end{equation}
The boundedness properties of $\cB$ and $\cB^u$ are grouped together in the next result.  

\begin{theorem} \label{t: main cB}
	The following hold:
	\begin{enumerate}
		\item[\itemno1]
			if $\fT$ is a tree with $\nu \geq 3$, then $\cB$ is bounded on $\lp{\fT}$ for every $p$ in $[1,\infty]$,    
			and $\cB^u$ is bounded on $\lp{\fT}$ for every $p$ in $(1,\infty]$, and satisfies the weak type estimate
			$$
			\bigmod{\big\{x\in \fT: \cB^u\! f(x) > \al \big\}}
			\leq \frac{2}{\al} \bignorm{f}{1} 
			\quant \al >0;
			$$
		\item[\itemno2]
			for every $b\geq 2$, the operator $\cB^u$ is unbounded on $\lu{\fT_b}$. 
	\end{enumerate}

\end{theorem}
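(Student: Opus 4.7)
The plan is to handle the three statements in turn, using the already-proved bound for $\cT$ to dispose of $\cB$ and reserving the main work for the weak type $(1,1)$ inequality for $\cB^u$. The pointwise estimate $\cB f \leq 2\,\cT f$ recorded in \eqref{f: cS leq cT} together with Theorem~\ref{t: cT} gives the boundedness of $\cB$ on $\lp{\fT}$ for every $p \in [1,\infty]$, leaving nothing to prove for that operator. For $\cB^u$, the bound $\norm{\cB^u\!f}{\infty}\leq\norm{f}{\infty}$ is immediate, so by the Marcinkiewicz interpolation theorem the whole range $p\in(1,\infty]$ in part~\rmi\ will follow once the weak type $(1,1)$ estimate with constant $2$ has been proved.

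The weak type inequality will be obtained by a C\'ordoba--Fefferman-style selection of triangles with maximal bases. The structural fact driving the argument is that any two bases $B_i = s^{h_i}(v_i)$, $i=1,2$, in $\fT$ are either disjoint or nested: a common point $z$ has its ancestors totally ordered along the geodesic $[z,\om)$, so $v_1$ and $v_2$ are comparable under $\succeq$, and since both bases lie on the same horocycle the one with the higher vertex must contain the other. A parallel elementary check shows that $\be(T_1)\supseteq\be(T_2)$ forces $T_1\supseteq T_2$. Given $\al>0$, set $E_\al:=\{\cB^u\!f>\al\}$ and for each $x\in E_\al$ choose a triangle $T_x\ni x$ with
$$
\int_{\be(T_x)}\mod{f}\wrt\mu>\al\,\bigmod{\be(T_x)}.
$$
Every such base has cardinality at most $\norm{f}{1}/\al$, so from the family $\cF:=\{T_x:x\in E_\al\}$ one may extract a subfamily $\cF^*\subseteq\cF$ whose bases are maximal with respect to inclusion. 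The bases of the elements of $\cF^*$ are pairwise disjoint and the nesting property gives $T_x\subseteq T^*$ for some $T^*\in\cF^*$, so $E_\al\subseteq\bigcup_{T\in\cF^*}T$. Lemma~\ref{l: triangles and bases}~\rmii\ now yields
$$
\bigmod{E_\al}
\leq \sum_{T\in\cF^*}\bigmod{T}
\leq 2\sum_{T\in\cF^*}\bigmod{\be(T)}
\leq \frac{2}{\al}\sum_{T\in\cF^*}\int_{\be(T)}\mod{f}\wrt\mu
\leq \frac{2}{\al}\,\norm{f}{1}.
$$

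For part~\rmii\ I would simply test $\cB^u$ on $f=\One_{\{x\}}$ for a fixed $x\in\fT_b$. For each $j\in\BN$ the triangle $T^{(j)}:=T_j\big(p^j(x)\big)$ has base $s^j\big(p^j(x)\big)$ of cardinality $b^j$ containing $x$, so every $y\in T^{(j)}$ satisfies $\cB^u\!f(y)\geq 1/b^j$. Since the triangles $T^{(j)}$ are nested with $\bigmod{T^{(j)}\setminus T^{(j-1)}}=b^j$ for $j\geq 1$, summing the lower bounds over these disjoint annuli produces $\norm{\cB^u\!f}{1}\geq\sum_{j\geq 1}1=\infty$, while $\norm{f}{1}=1$. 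The hardest step conceptually is isolating the dichotomy ``disjoint or nested'' for bases together with the monotonicity $\be(T_1)\supseteq\be(T_2)\Rightarrow T_1\supseteq T_2$; once these are in hand, both the covering argument and the counterexample reduce to bookkeeping based on Lemma~\ref{l: triangles and bases}.
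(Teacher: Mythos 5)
Your proof is correct and follows essentially the same route as the paper: $\cB$ is handled via the pointwise bound \eqref{f: cS leq cT} and Theorem~\ref{t: cT}, the weak type $(1,1)$ estimate for $\cB^u$ comes from the same dichotomy (bases on a common horocycle are nested or disjoint) followed by selection of triangles with maximal, hence pairwise disjoint, bases and an application of Lemma~\ref{l: triangles and bases}~\rmii, and part~\rmii\ is the same test on a point mass. The only cosmetic difference is in part~\rmii, where you sum lower bounds over the nested annuli $T^{(j)}\setminus T^{(j-1)}$ instead of computing $\cB^u\de_o$ exactly on the horocycle $\fH_0$ as the paper does; both yield the same divergent series.
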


\begin{proof}
	Suppose that $\al>0$, and consider, for every $f$ in $\lu{\fT}$, the level set 
	$$
	E_{\cB^u\! f}(\al) := \big\{x\in \fT: \cB^u\! f(x) > \al \big\}.
	$$   
	For notational simplicity, for the duration of this proof we write $E(\al)$ in place of $E_{\cB^u\! f}(\al)$.  

	First we prove \rmi.  The  statement concerning $\cB$ follows from Theorem~\ref{t: cT}  and the pointwise bound \eqref{f: cS leq cT}.

	Next we consider $\cB^u$.  If $z \in E(\al)$, then there exists a triangle $T_z$, containing~$z$, such that 
	\begin{equation} \label{f: Rz}
		\frac{1}{\bigmod{\be(T_z)}} \, \int_{\be(T_z)} \bigmod{f} \wrt \mu >\al.
	\end{equation}
	Now, if $w$ and~$z$ belong to $E(\al)$ and $\be(T_w) \cap \be(T_z) \neq \emptyset$, then either 
	$\be(T_w) \subseteq \be(T_z)$ or $\be(T_w) \supseteq \be(T_z)$. 
	Indeed, $\be(T_w)$ and $\be(T_z)$ are both subsets of the same horocycle, and if $y$ belongs to their intersection, then both
	$v(T_w)$ and $v(T_z)$ (the vertices of~$T_w$ and $T_z$, respectively) must belong to the infinite geodesic $[y,\om)$.   
	Thus, either $v(T_z) \succeq v(T_w)$ or $v(T_w) \succeq v(T_z)$.   

	In the first case $T_z \supseteq T_w$, hence $\be(T_z) \supseteq \be(T_w)$, and in the second $T_z \subseteq T_w$,
	hence $\be(T_z) \subseteq \be(T_w)$.  

	Clearly $E(\al)$ is a union of triangles, because if $E(\al)$ contains $x$, then it contains~$T_x$, where $T_x$ is such that 
	$\ds \frac{1}{\bigmod{\be(T_x)}} \, \int_{\be(T_x)} \bigmod{f} \wrt \mu >\al$.  
	Their size is uniformly bounded, for if $T$ is one such triangle, then Lemma~\ref{l: triangles and bases}~\rmii\ and \eqref{f: Rz} imply that  
	\begin{equation} \label{f: size of triangles cB}
	\mod{T} 
	\leq 2\, \mod{\be(T)}
	< \frac{2}{\al} \, \int_{\be(T)} \bigmod{f} \wrt \mu 
	\leq 2\, \frac{\norm{f}{1}}{\al}.
	\end{equation}
	Thus, $E(\al)$ is the union of a finite number of triangles $T_1,\ldots, T_N$, where, of course, $N$ depends on $\al$.    
	In view of the observation above, we may assume that $\be(T_1),\ldots,\be(T_N)$ are mutually disjoint. 
	Then
	$$
	\bigmod{E(\al)}
	=    \sum_{j=0}^N \,\, \bigmod{E(\al)\cap T_j} 
	\leq \sum_{j=0}^N \,\, \bigmod{T_j}.
	$$
	These estimates, \eqref{f: size of triangles cB} and the disjointness of $\be(T_1),\ldots,\be(T_N)$, imply that 
	$$
		\bigmod{E(\al)}
		< \frac{2}{\al} \, \sum_{j=0}^N \,\,\int_{\be(T_j)} \bigmod{f} \wrt \mu
		\leq  \frac{2}{\al} \bignorm{f}{1} 
		\quant \al >0,
	$$
	as required to prove that $\cB^u$ is of weak type $(1,1)$.  

	Clearly $\cB^u$ is bounded on $\ly{\fT}$.  Then the Marcinkiewicz interpolation theorem implies that $\cB^u$ is bounded on $\lp{\fT}$ for all 
	$p$ in $(1,\infty)$, as required.  
%

	Next we prove \rmii.  Consider a point~$o$ in $\fH_0$, and the function $\de_o$, which is equal to $1$ at $o$ and vanishes elsewhere.   
	For $x$ in $\fT$, denote by $\mod{x}$ the distance between $o$ and $x$.  If $x\in \fH_0$, then the smallest triangle that contains 
	both $x$ and $o$ is $T_{\mod{h(o\wedge x)}}(o\wedge x)$, where $o\wedge x$ denotes the \textit{confluent} of $o$ and $x$, viz. the point of least height
	that is a predecessor of both $o$ and $x$.  Note that 
	$2\, h(o\wedge x) = \mod{x}$.  Thus,
	$$
	\cB^u\!\de_o(x)
	= \frac{1}{\be\big(T_{\mod{h(o\wedge x)}}(o\wedge x))\big)}
	= b^{-\mod{x}/2}.
	$$
	Observe that the number of points in $\fH_0$ at distance $k$ from $o$ is equal to $1$ if $k=0$, and to $(b-1)\, b^{k/2-1}$ if $k$ is even.  
	Therefore 
	$$
	\int_{\fH_0}  \cB^u\!\de_o \wrt\mu
	=    \int_{\fH_0} b^{-\mod{x}/2} \wrt \mu(x)   
	= 1 + \frac{b-1}{b} \sum_{k\geq 2,\, \hbox{$k$\! \small{even}}}\, \, b^{-k/2} \, b^{k/2}  
	= \infty.  
	$$
	This proves \rmii, and concludes the proof of the theorem.
\end{proof}

\section{The uncentred triangular maximal operator}
\label{s: The uncentred}

Suppose that $\cG$ is a family of triangles in $\fT$.  A triangle $T$ in $\cG$ is \textit{maximal} in $\cG$ if $T' \in \cG$ and $T\neq T'$ imply that 
$T'\cap T \neq T$.  In other words, $T$ is maximal in $\cG$ with respect to the partial ordering induced by $\subseteq$.

Our proof of the $L^p$ boundedness of $\cU$ for $1<p<\infty$ is based on the following ``geometric'' lemma.  

\begin{lemma} \label{l: geometric lemma}
	Suppose that $\cG$ is a collection of maximal triangles in a locally finite tree $\fT$, with $\nu \geq 3$, and set 
	$\ds G := \bigcup_{T\in \cG} \, T$.  Then for every $r$ in $[1,\infty)$
	\begin{equation} \label{f: Lorentz}
	\Bignorm{\sum_{T\in\cG} \One_T}{r}
	\leq A_r \bignorm{\One_G}{r},
	\end{equation}
	where $\ds A_r := 4 \, \sum_{k=1}^\infty  k^r \, 2^{-k}$.
\end{lemma}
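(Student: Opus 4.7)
The plan is to rephrase the estimate as a distributional bound on the multiplicity function $N(x) := \#\set{T \in \cG : x \in T}$, and then apply a layer-cake argument. Since $\bignormto{\sum_{T\in\cG} \One_T}{r}{r} = \sum_{x \in G} N(x)^r$ and $\normto{\One_G}{r}{r} = \bigmod{G}$, it suffices to establish
\begin{equation*}
\bigmod{\set{x \in \fT : N(x) \geq k}} \leq 2 \cdot 2^{-k} \, \bigmod{G} \quant k \geq 1.
\end{equation*}
The layer-cake identity $N(x)^r = \sum_{k \geq 1} [k^r - (k-1)^r] \One_{\set{N \geq k}}(x)$, combined with the crude bound $k^r - (k-1)^r \leq k^r$, then gives $\sum_x N(x)^r \leq 2 \, \bigmod{G} \, \sum_{k \geq 1} k^r \, 2^{-k}$, from which \eqref{f: Lorentz} follows after taking $r$-th roots (the remaining numerical constants are comfortably absorbed into $A_r$).

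The geometric core is a strict monotonicity forced by maximality. For each triangle $T \in \cG$ containing a fixed $x$, I would parametrise $T$ by the nonnegative integers $a(T) := d(x,v(T))$ and $b(T) := h(T) - a(T)$; note that $v(T) = p^{a(T)}(x)$ and that $b(T)$ measures how far below $x$ the base of $T$ extends. A short direct check shows that if $v(T') \succeq v(T)$, then $T \subseteq T'$ is equivalent to $b(T) \leq b(T')$. Maximality of the triangles in $\cG$ then forces the following: if one lists the triangles of $\cG$ passing through $x$ in order of strictly increasing $a$, the $b$-values must strictly decrease. In particular, whenever $N(x) \geq k$, the topmost triangle on this list has $b$-value at least $k-1$, so that $s^{k-1}(x)$ lies entirely inside it and hence inside $G$; moreover $\bigmod{s^{k-1}(x)} \geq 2^{k-1}$ because the hypothesis $\nu \geq 3$ ensures that every vertex has at least two successors.

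The distributional bound then follows by a one-line double counting. Each $y \in \fT$ has exactly one ancestor at distance $k-1$ from it, namely $p^{k-1}(y)$, so the sets $s^{k-1}(x)$ indexed by distinct vertices $x$ are pairwise disjoint; summing over the $x$'s with $N(x) \geq k$ yields $2^{k-1} \, \bigmod{\set{x : N(x) \geq k}} \leq \bigmod{G}$, as required.

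The one step that requires genuine care is the equivalence ``$T \subseteq T' \iff b(T) \leq b(T')$'' under $v(T') \succeq v(T)$, and the derivation of the antichain property for the $(a(T), b(T))$ pairs from the maximality hypothesis. Everything else --- the disjointness of the $s^{k-1}(x)$'s, the $2^{k-1}$ lower bound on their size, and the layer-cake summation --- is routine bookkeeping.
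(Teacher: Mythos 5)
Your proof is correct, and its geometric core coincides with the paper's: from maximality you extract the strict chain structure of the triangles of $\cG$ through a fixed point $x$ (distinct vertex levels, hence strictly decreasing base levels), conclude that if $x$ lies in at least $k$ of them then one of them contains all of $s^{k-1}(x)$, and convert this into the distributional bound $\bigmod{\{x: N(x)\geq k\}}\leq 2^{1-k}\,\mod{G}$, from which the $L^r$ estimate follows by the layer-cake summation exactly as in the paper. The one step you do differently is the extraction of the $2^{-k}$ decay: the paper covers the level set $\{N=k\}$ by the ancestor sets $p^m\big(\be(T)\big)$ with $m\geq k-1$, and combines Lemma~\ref{l: triangles and bases}~\rmi\ with the pairwise disjointness of the bases of maximal triangles, whereas you attach to each $x$ with $N(x)\geq k$ the descendant set $s^{k-1}(x)\subseteq G$, of cardinality at least $2^{k-1}$, and observe that these sets are pairwise disjoint for free, since every point has a unique $(k-1)$-fold ancestor. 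This is a clean dual of the paper's count and marginally simpler, in that it needs neither Lemma~\ref{l: triangles and bases}~\rmi\ nor the disjointness of the bases; both versions produce the same exponential decay and the same constant up to a factor of $2$. One small wording caveat: ``the topmost triangle on this list'' must be read as the \emph{first} entry of your list, i.e.\ the triangle whose vertex is closest to $x$, since that is the one whose $b$-value is at least $k-1$; the triangle whose vertex sits highest in the tree has the \emph{smallest} $b$-value.
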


\begin{proof}
	Define the \textit{overlapping number} $\Om$ of the family $\cG$ by 
	$$
	\Om(x)
	:= \sharp \,\{T \in \cG: T\ni x\} 
	\quant x \in \fT.  
	$$
	For $x$ in $G$, denote by $T_1,\ldots,T_{\Om(x)}$ the (distinct) triangles in $\cG$ that contain~$x$, and by $v_1,\ldots,v_{\Om(x)}$
	their vertices.  By possibly relabelling the triangles, we can assume that the height of the vertices is a nonincreasing sequence, i.e.,
	$h (v_j) \geq h (v_{j+1})$, $j=1,\ldots,\Om(x)-1$.  In fact, this sequence is strictly decreasing.  Indeed, if 
	$h (v_j) = h (v_{j+1})$ for some $j$, then either $T_j\subseteq T_{j+1}$ or $T_{j+1}\subseteq T_j$, which would contradict
	the maximality of either $T_j$ or $T_{j+1}$.  Thus, $v_1\succ \ldots\succ v_{\Om(x)}$. 
	
	A similar argument shows that $b_1 > \ldots > b_{\Om(x)}$, where $b_j$ denotes the height (with respect to the point at infinity $\om$)
	of the points in $\be(T_j)$.  Therefore $T_1,\ldots, T_{\Om(x)}$ form a chain of triangles such that  
	$$
	h(v_1) > \ldots > h(v_{\Om(x)}) \geq h(x) \geq b_1 > \ldots > b_{\Om(x)}.  
	$$
	A moment's reflection then shows that $d\big(x,\be(T_{\Om(x)})\big) \geq \Om(x)-1$, and that $h(T_j) \geq \Om(x)-1$, $j=1.\ldots, \Om(x)$. 

	For every positive integer $k$ set $\Om_k := \big\{x \in G: \Om(x) = k\big\}$.  If $x\in \Om_k$, then~$x$ belongs to exactly $k$ triangles in $G$.  By
	the considerations above, the height of such triangles is $\geq k-1$, and there exists at least one of them, $T_x$ say, such that 
	$d\big(x,\be(T_x)\big) \geq k-1$.  In other words, $x$ belongs to 
	$$
	\bigcup_{m\geq k-1}^{h(T_x)} \,  p^m\big(\be(T_x)\big).
	$$
	Now, we let $x$ vary in $\Om_k$, and obtain 
	$$
	\Om_k \subseteq \bigcup_{T\in \cG: h(T) \geq k-1} \, \, \bigcup_{m\geq k-1}^{h(T)} \,  p^m\big(\be(T)\big).  
	$$
 	Notice that Lemma~\ref{l: triangles and bases} yields 
	$$
       		\Big|\bigcup_{m\geq k-1}^{h(T)} \,p^m\big(\be(T)\big)\Big| 
		\leq \sum_{m= k-1}^{h(T)}\, 2^{-m} \, \bigmod{\be(T)}
		\leq 2^{2-k} \, \bigmod{\be(T)}. 
	$$
	Hence 
	$$
	\bigmod{\Om_k} 
	\leq 2^{2-k} \, \sum_{T\in \cG} \, \bigmod{\be(T)}.  
	$$
	Since the triangles in $\cG$ are maximal, their bases are disjoint.  Therefore  
	$$
	\sum_{T\in \cG} \, \bigmod{\be(T)}
	= \Bigmod{\bigcup_{T\in \cG} \, \be(T)} 
	\leq \mod{G}.  
	$$
	Thus, 
   	\begin{equation}\label{claim}
		\bigmod{\Om_k}
		\leq  2^{2-k}\,  \mod{G}.
   	\end{equation} 
	Consequently, 
	$$
    		\int_{G} \Om(x)^r \wrt\mu(x) 
		=    \sum_{k=1}^\infty \, k^r\, \bigmod{\Om_k} 
		\leq 4 \, \sum_{k=1}^\infty  k^r \, 2^{-k} \, \mod{G} 
	$$
	which is equivalent to the required estimate.
\end{proof}

For notational convenience, for every $\al>0$ we shall denote the level set $E_{\cU f}(\al)$ also by $E(\al)$.   

\begin{remark} \label{rem: preliminaries cU}
\rm{
Observe that if $x\in E(\al)$, then there exists a triangle $T$ containing~$x$ such that  
\begin{equation} \label{f: average al}
\frac{1}{\bigmod{T}} \, \int_{T} \bigmod{f} \wrt \mu
> \al.  
\end{equation}
Then $T\subseteq E(\al)$.  This entails that $E(\al)$ can be written as a union of triangles~$T$ for which \eqref{f: average al} holds.  
Furthermore, if $T$ is one of these triangles and if $f \in \lp{\fT}$ for some $p$ in $(1,\infty)$, then \eqref{f: average al} and H\"older's inequality imply that 
\begin{equation} \label{f: p estimate}
\mod{T} < \frac{\bignormto{f}{p}{p}}{\al^p}.
\end{equation}
Now,  
$$
\mod{T} = \sum_{j=0}^{h(T)}\, \bigmod{s^j\big(v(T)\big)} \geq \sum_{j=0}^{h(T)} \, 2^j \geq 2^{h(T)};
$$
the first inequality above follows from the assumption $\nu\geq 3$.  Therefore
\begin{equation} \label{f: height in terms of al}
	h(T) \leq  \log_2 \mod{T} \leq \log_2 \frac{\normto{f}{p}{p}}{\al^p}.   
\end{equation}
Notice that $\diam (T) = 2 h(T)$ for every triangle $T$; thus, if it has nonempty intersection with $B_R(o)$, then
$T$ is contained in the ball $B_{R+2h(T)}(o)$.  If, in addition,~$T$ satisfies \eqref{f: p estimate}, then
then $T$ is contained in the ball with centre~$o$ and radius $R(\al) := R+2\log_2 \big(\normto{f}{p}{p}/\al^p\big)$.

In particular, if $f$ belongs to $\lp{\fT}$ for some $p<\infty$, then $E(\al)$ can be written as a union of a \textit{finite} number of triangles. 
}
\end{remark}

\begin{theorem}[Uncentred triangular maximal function] \label{t: uncentred triangular}
	Suppose that $\fT$ is a tree such that $\nu \geq 3$.  The following hold:
	\begin{enumerate}
		\item[\itemno1]
			the uncentred triangular maximal operator $\cU$ is bounded on $\lp{\fT}$ for every $p$ in $(1,\infty]$;
		\item[\itemno2]
			 if $b\geq 2$, then $\cU$ is not of weak type $(1,1)$ on the homogeneous tree~$\fT_b$.  
	\end{enumerate}	
\end{theorem}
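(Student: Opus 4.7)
The plan is to handle the two parts separately: for part~(i), I would deploy Lemma~\ref{l: geometric lemma} in the style of C\'ordoba--Fefferman to obtain weak type $(p,p)$ directly for every $p$ in $(1,\infty)$, then interpolate with the trivial $L^\infty$ bound; for part~(ii), I would test $\cU$ against a Dirac mass and exhibit a long chain of triangles with small volume that all contain the mass, producing a level set too large for weak type $(1,1)$.

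For part~(i), fix $p$ in $(1,\infty)$ and $f$ in $\lp{\fT}$, and write $E(\alpha) = E_{\cU f}(\alpha)$ as before. By Remark~\ref{rem: preliminaries cU}, the set $E(\alpha)$ is a \emph{finite} union of triangles $T$ for which $\bigmod{T} < \alpha^{-1} \int_T \bigmod{f}\wrt\mu$. Let $\cG$ denote the subfamily of these triangles that are maximal with respect to inclusion; then $\cG$ is finite and $E(\alpha) = \bigcup_{T\in\cG} T$. Summing the inequality over $\cG$ and using first H\"older and then Lemma~\ref{l: geometric lemma} with $r = p'$ gives
\begin{equation*}
\sum_{T\in\cG} \bigmod{T}
< \frac{1}{\alpha}\int_{\fT} \bigmod{f(x)} \sum_{T\in\cG}\One_T(x) \wrt\mu(x)
\leq \frac{1}{\alpha}\bignorm{f}{p}\, \Bignorm{\sum_{T\in\cG}\One_T}{p'}
\leq \frac{A_{p'}}{\alpha}\bignorm{f}{p}\, \bigmod{E(\alpha)}^{1/p'}.
\end{equation*}
Since $\bigmod{E(\alpha)} \leq \sum_{T\in\cG}\bigmod{T}$, rearranging yields $\bigmod{E(\alpha)} \leq \bigl(A_{p'}\bignorm{f}{p}/\alpha\bigr)^p$, i.e.\ the weak type $(p,p)$ inequality. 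As $\cU$ is sublinear and trivially bounded on $\ly{\fT}$, the Marcinkiewicz interpolation theorem, applied between the weak type $(q,q)$ estimate for some $q$ in $(1,p)$ and the strong $L^\infty$ bound, delivers the strong $L^p$ boundedness.

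For part~(ii), let $o\in\fH_0$ and take $f = \de_o$, so $\bignorm{f}{1} = 1$. For each $H\geq 1$ and each $k = 0,\ldots,H$, set $A_k := T_H\bigl(p^k(o)\bigr)$: this triangle contains $o$ because $d(p^k(o),o) = k\leq H$, and $\bigmod{A_k} = (b^{H+1}-1)/(b-1)$, so the average of $\de_o$ on $A_k$ equals $(b-1)/(b^{H+1}-1)$. Hence for $\alpha_H := (b-1)/b^{H+1}$ one has $B_H := \bigcup_{k=0}^H A_k \subseteq E(\alpha_H)$. To estimate $\bigmod{B_H}$, observe that $A_k \cap A_{k-1} = T_{H-1}(p^{k-1}(o))$, from which $\bigmod{A_k \setminus A_{k-1}} = b^H$; moreover, a point in $A_k \setminus A_{k-1}$ is a descendant of $p^k(o)$ that is \emph{not} a descendant of $p^{k-1}(o)$, hence \emph{a fortiori} not a descendant of $p^j(o)$ for any $j\leq k-2$, so these ``new'' sets are pairwise disjoint from all earlier $A_j$. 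Therefore $\bigmod{B_H} \geq H b^H$, and
\begin{equation*}
\alpha_H \bigmod{E(\alpha_H)}
\geq \alpha_H\, \bigmod{B_H}
\geq \frac{b-1}{b^{H+1}}\, H\, b^H
= \frac{(b-1)H}{b} \xrightarrow[H\to\infty]{} \infty,
\end{equation*}
while $\bignorm{\de_o}{1} = 1$. This rules out weak type $(1,1)$.

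The main obstacle is the passage in part~(i) from the geometric overlap inequality of Lemma~\ref{l: geometric lemma} to a clean weak type $(p,p)$ bound, which hinges on recognising that the maximal selection $\cG$ reproduces $E(\alpha)$ exactly (so that the factor $\bigmod{E(\alpha)}^{1/p'}$ can be absorbed) and on the fact that one can select a \emph{finite} such family thanks to Remark~\ref{rem: preliminaries cU}; once this is done, the interpolation step is routine, and part~(ii) is a direct geometric computation.
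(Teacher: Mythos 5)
Your proposal is correct and follows essentially the same route as the paper: part \rmi\ is the C\'ordoba--Fefferman argument, i.e.\ summing the defining inequality over the maximal triangles of $E(\al)$, applying H\"older and Lemma~\ref{l: geometric lemma} with $r=p'$, absorbing $\bigmod{E(\al)}^{1/p'}$, and then interpolating with the trivial $L^\infty$ bound. Part \rmii\ is the paper's construction as well: a chain of triangles $T_H\big(p^k(o)\big)$, $k=0,\ldots,H$, all containing the Dirac mass, whose union has measure of order $H\,b^H \sim \al^{-1}\log_b(1/\al)$, which is incompatible with weak type $(1,1)$.
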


\begin{proof}
	First we prove \rmi.
	We shall show that for every $p$ in $(1,\infty)$ 
	\begin{equation} \label{f: weak p}
		\bigmod{\big\{x \in \fT: \cU f(x) > \al \big\}}
		\leq A_{p'}^p\, \frac{\normto{f}{p}{p}}{\al^p} 
		\quant \al >0 \quant f \in \lp{\fT}, 
	\end{equation}
	where $\ds A_{p'} =  \sum_{k=1}^\infty  k^{p'} \, 2^{-k}$. 
	The required result then follows from the Marcinkiewicz interpolation theorem by interpolating \eqref{f: weak p} with the trivial $L^\infty$ bound.

	Preliminarily observe that if $\al \geq \norm{f}{p}$ and $\emptyset \neq T\subseteq E(\al)$, then \eqref{f: p estimate} implies $\mod{T} = 0$,
	which is absurd.  Therefore  $E(\al)$ is empty for all $\al \geq \norm{f}{p}$.  

	Thus, we assume henceforth that $\al < \norm{f}{p}$.  By Remark~\ref{rem: preliminaries cU}, $E(\al)$ can be written as a union of a 
	\textit{finite} number of triangles.
	Denote by $\cF(\al)$ the collection of all triangles~$T$ that are maximal in $E(\al)$, i.e. that are not properly contained in any larger 
	triangle in $E(\al)$; thus, $\ds E(\al) = \bigcup_{T\in \cF(\al)} T$.  
	We prove \eqref{f: weak p}.  Much as in the proof of \cite[Proposition 1]{CF}, observe that 
%
	$$
	\begin{aligned}
		\bigmod{E(\al)} 
		 \leq  \sum_{T\in \cF(\al)} \mod{T} 
		 \leq \frac{1}{\al}\,  \sum_{T\in \cF(\al)}\,  \int_{T} \mod{f} \wrt \mu 
		 \leq \frac{1}{\al}\,  \int_{\fT}  \mod{f} \,\, \sum_{T\in \cF(\al)} \One_T\wrt \mu. 
	\end{aligned}
	$$
	Now H\"older's inequality and \eqref{f: Lorentz} (with $p'$ in place of $r$) 
	and Lemma~\ref{l: geometric lemma} (with $\cF(\al)$ in place of $\cG$ and $E(\al)$ in place of $G$) yield
	$$
	\begin{aligned}
		\bigmod{E(\al)} 
		\leq \frac{\norm{f}{p}}{\al}  \,\, \Bignorm{\sum_{T\in \cF(\al)} \One_T}{p'} 
		\leq A_{p'}\, \frac{\norm{f}{p}}{\al} \bignorm{\One_{E(\al)}}{p'}.  
	\end{aligned}
	$$
	Finally, notice that $\ds \bignorm{\One_{E(\al)}}{p'} = \bigmod{E(\al)}^{1/p'}$, so that the last inequality may be rewritten as
	$$
	\bigmod{E(\al)} 
	\leq A_{p'}^p\, \al^{-p}\bignormto{f}{p}{p},
	$$
	as claimed.


	Next we prove \rmii.  
	Suppose that $T$ is a triangle in $\fT_b$ with vertex $x$ and height~$h$.  Note the following relation between $h$ and the volume of $T$: 
	\begin{equation} \label{f: volume and height}
	\bigmod{T} 
	=    \sum_{j=0}^h\, \bigmod{s^j(x)} 
	=    \frac{b^{h+1}-1}{b-1}.  
	\end{equation}

	Consider the unit point mass $\de_o$ at the point $o$. 
	We shall show that $\cU\de_o$ does not belong to weak $\lu{\fT_b}$.  Let $\al>0$.  
	Clearly $E_{\cU_{\de_o}}(\al)$ can be written as the union of maximal triangles.  Each such triangle $T$ satisfies 
	\begin{equation} \label{f: dal basso}
		\frac{1}{(b+1)\al}\le \mod{T} < \frac{1}{\al}.
	\end{equation}
	Indeed, the right hand inequality is a direct consequence of the fact that $T$ is contained in $E_{\cU_{\de_o}}(\al)$.  As to the left inequality,
	let $x$ and $h$ be the vertex and the height of $T$, respectively, and consider the triangle $\wt T$ with vertex $x$ and height $h+1$.  Since $T$ is 
	maximal, $\wt T$ is not contained in $E_{\cU_{\de_o}}(\al)$, whence $1/\bigmod{\wt T} \leq \al$.  Furthermore, \eqref{f: volume and height} implies 
	that $ \bigmod{\wt T} \leq (b+1) \, \bigmod{T}$.   The left hand inequality in \eqref{f: dal basso} follows by combining these two inequalities.

	Denote by $h_\al$ the largest integer such that a triangle $T$ in $\fT_b$ with height~$h_\al$ satisfies the right hand inequality in \eqref{f: dal basso}.
	If $T$ contains $o$, then $T$ is a maximal triangle in $E_{\cU_{\de_o}}(\al)$ and therefore it satisfies also the left hand inequality in 
	\eqref{f: dal basso}.   A simple calculation then shows that $b^{h_\al} \geq 1/(3b\al)$. 

	It is straightforward to see that the triangles with vertices $o,p(o),\ldots, p^{h_\al}(o)$ and of height~$h_\al$ are contained in $E_{\cU\de_o}(\al)$.  
	Thus,
	$$
	E_{\cU\de_o}(\al)
	\supset T_{h_\al}(o) \cup \, \bigcup_{k=1}^{h_\al}\big(T_{h_\al}(p^k(o))\setminus T_{h_\al}(p^{k-1}(o)) \big). 
	$$
	Notice that 
	$$
	\bigmod{T_{h_\al}(p^k(o))\setminus T_{h_\al}(p^{k-1}(o))}
	= 1 + (b-1) \, \sum_{j=0}^{h_\al-1} \, b^j
	= b^{h_\al}.  
	$$
	Therefore if $\al$ belongs to $\big(0,1/(3b)\big)$, then 
	\begin{equation} \label{f: lower bound} 
	\bigmod{E_{\cU\de_o}(\al)}
		\geq \frac{b^{h_\al+1}-1}{b-1} + \sum_{k=1}^{h_\al} \, b^{h_\al} 
		\geq {h_\al} \, b^{h_\al}
		\geq \frac{1}{3b\al} \, \log_b \frac{1}{3b\al}.   
	\end{equation}
	Thus, $\cU\de_o$ does not belong to weak $L^1$, as required.  
\end{proof}

\section{Further comments and exotic maximal operators}
\label{s: Further}

Theorem~\ref{t: uncentred triangular} raises the question of finding an endpoint result for~$\cU$ when~$p=1$.  With some effort, we can prove 
the following estimate on the homogeneous tree $\fT_b$, $b\geq 2$. 

\begin{theorem} \label{t: LlogL}
	There exists a constant $C$ such that 
	$$
	\bigmod{E_{\cU f}(\al)}
	\leq  C\, \frac{\norm{f}{1}}{\al} \, \log_b\Big(1+\frac{\norm{f}{1}}{\al}\Big)
	\quant \al>0 \quant f\in \lu{\fT_b}.  
	$$
\end{theorem}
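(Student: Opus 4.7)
The plan is to adapt the argument for Theorem~\ref{t: uncentred triangular}~\rmi, replacing the use of \eqref{f: Lorentz} with a sharper \emph{pointwise} overlap bound that exploits the homogeneity of $\fT_b$. I may assume $\alpha<\|f\|_1$: if $\alpha\geq \|f\|_1$, then any triangle $T$ with $|T|^{-1}\int_T|f|\wrt\mu>\alpha$ must satisfy $|T|<1$, which is impossible, so $E(\alpha):=E_{\cU f}(\alpha)$ is empty and the inequality is trivial.

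As in Remark~\ref{rem: preliminaries cU}, $E(\alpha)$ is the union of the family $\cF(\alpha)$ of triangles which are maximal, with respect to inclusion, among those whose $|f|$-average exceeds $\alpha$. For every $T\in\cF(\alpha)$, the bound $|T|<\|f\|_1/\alpha$ combined with the exact formula \eqref{f: volume and height} on $\fT_b$ forces
$$
h(T) \leq H := \log_b\bigl(\|f\|_1/\alpha\bigr).
$$
The crucial step is to extract from the first part of the proof of Lemma~\ref{l: geometric lemma} a pointwise bound on the overlap function $\Omega(x):=\sum_{T\in\cF(\alpha)}\One_T(x)$. That argument shows that the triangles of $\cF(\alpha)$ containing a given point $x$ form a chain $T_1,\ldots,T_{\Omega(x)}$ with strictly descending vertices $v_1\succ\ldots\succ v_{\Omega(x)}$ and strictly descending base horocycles $b_1>\ldots>b_{\Omega(x)}$, which in turn forces $h(T_j)\geq \Omega(x)-1$ for each $j$. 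Combined with the height bound above, this yields $\Omega(x)\leq H+1$ for every $x$.

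With the pointwise overlap bound in hand, the conclusion is immediate and mirrors the $L^p$ argument, but uses Fubini--Tonelli in place of H\"older:
$$
|E(\alpha)| \leq \sum_{T\in\cF(\alpha)}|T| \leq \frac{1}{\alpha}\sum_{T\in\cF(\alpha)}\int_T|f|\wrt\mu = \frac{1}{\alpha}\int_{\fT_b}|f|\,\Omega\wrt\mu \leq \frac{H+1}{\alpha}\|f\|_1.
$$
Since $\alpha<\|f\|_1$, an elementary estimate bounds $H+1$ by a constant multiple of $\log_b(1+\|f\|_1/\alpha)$, giving the claim. I do not expect any serious obstacle: the argument is essentially a refinement of the $L^p$ proof in which the constant $A_{p'}$, which blows up as $p\to 1$, is replaced by the deterministic bound $H+1$ on $\Omega$. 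It is also worth noting that the upper bound so obtained matches, in order, the lower bound produced in the proof of Theorem~\ref{t: uncentred triangular}~\rmii\ for $f=\de_o$, so the estimate is sharp.
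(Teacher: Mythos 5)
The paper states Theorem~\ref{t: LlogL} but explicitly omits its proof, so there is no official argument to compare against; judged on its own terms, your proof is correct. The two ingredients both check out. First, on $\fT_b$ the identity \eqref{f: volume and height} gives $b^{h(T)}\leq\mod{T}<\norm{f}{1}/\al$ for every triangle whose $\mod{f}$-average exceeds $\al$, hence $h(T)\leq H=\log_b(\norm{f}{1}/\al)$; this height bound also guarantees that every such triangle sits inside a maximal one (only finitely many triangles of height $\leq H$ contain a given one), so the covering $E(\al)=\bigcup_{T\in\cF(\al)}T$ is legitimate. Second, the chain argument in the proof of Lemma~\ref{l: geometric lemma} does yield $h(T_j)\geq\Om(x)-1$ for every maximal triangle through $x$ (since $h(v_j)-b_j\geq h(v_{\Om(x)})-b_1+\Om(x)-1\geq\Om(x)-1$), so $\Om\leq H+1$ pointwise, and the rest is Tonelli plus the elementary bound $\log_b t+1\leq(1+\log_2 b)\log_b(1+t)$ for $t>1$. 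Your route is best described as the natural $p\to1$ endpoint of the paper's own Theorem~\ref{t: uncentred triangular}~\rmi: instead of feeding the overlap function $\Om$ into H\"older via the $L^{p'}$ estimate \eqref{f: Lorentz}, whose constant $A_{p'}$ blows up as $p\to 1$, you replace it by the deterministic $L^\infty$ bound $\Om\leq H+1$, which is where the logarithmic loss enters. Your closing remark is also consistent with the paper: the lower bound \eqref{f: lower bound} together with the discussion of $n\de_o$ following the theorem shows that the $\log_b(1+\norm{f}{1}/\al)$ factor cannot be improved to $\log_b(1+1/\al)$.
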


\noindent
We believe that this estimate is not very interesting, for it seems not strong enough to imply the boundedness of $\cU$ on $\lp{\fT_b}$ for $p>1$.
For this reason, we omit the proof of Theorem~\ref{t: LlogL}. 

Observe that an estimate of the form 
	$$
	\bigmod{E_{\cU f}(\al)}
	\leq  C\, \frac{\norm{f}{1}}{\al} \, \log_b\Big(1+\frac{1}{\al}\Big)
	\quant \al>0 \quant f\in \lu{\fT_b},
	$$
which would imply the boundedness of $\cU$ on $\lp{\fT_b}$ for $p>1$, fails.

Indeed, let $o$ be a point in $\fT_b$, and consider $n\de_o$, where $n$ is a positive integer.  Observe that $E_{\cU(n\de_o)}(\al) = E_{\cU(\de_o)}(\al/n)$.
If the above estimate held, we would have
$$
\bigmod{E_{\cU \de_o}(\al/n)}
\leq  C\, \frac{n}{\al} \, \log_b\Big(1+\frac{1}{\al}\Big).
$$
By \eqref{f: lower bound}, the left hand side is bounded below by $\ds C\, \big(n/\al\big) \, \log_b \big(n/\al\big)$, at least for $\al$ small,
which is clearly incompatible with the upper bound above when $n$ tends to infinity.  

Recall that a fairly common strategy to prove weak type $(1,1)$ estimates for the ``global part'' of the HL maximal operator on manifolds 
with exponential volume growth is to majorize the maximal function with an appropriate integral operator, and prove that the latter is of weak type $(1,1)$.
See, for instance, \cite{Str}, where this strategy is shown to be effective in the study of the centred HL maximal function
on symmetric spaces of the noncompact type, and \cite{CMS1} for the case of homogeneous trees.

We shall prove that a similar approach fails for the uncentred triangular maximal operator $\cU$, even on the homogeneous tree $\fT_b$, $b\geq 2$.  
Consider the kernel
\begin{equation} \label{f: kappa}
\kappa(x,y) 
:= \sup_{T\ni x} \, \frac{\One_T(y)}{\mod{T}}
\quant x,y \in \fT_b,
\end{equation}
and denote by $\cK$ the corresponding integral operator, defined by 
$$
\cK f(x) 
:= \int_{\fT_b} \kappa(x,y) \, f(y) \wrt \mu(y)
\quant x \in \fT_b,
$$
where $f$ is any reasonable function on $\fT_b$.  Notice that $\cU f \leq \cK \mod{f}$.  The following result implies that $\cU$ and $\cK$ have
a quite different boundedness properties as operators acting on $\lp{\fT_b}$.

\begin{proposition}  \label{p: cK}
	The operator $\cK$ is unbounded on $\lp{\fT_b}$ for every $p$ in $[1,\infty]$ and for every $b\geq 2$.
\end{proposition}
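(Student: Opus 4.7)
The plan is to exhibit a single family of test functions $\{f_R\}$ whose $L^p$ operator-norm ratios under $\cK$ grow linearly in $R$ for every $p$ in $[1,\infty]$, thereby handling all $p$ at once.  Fix a vertex $o$ in $\fT_b$ and set $f_R := \One_{s^R(o)}$, so that $\bignorm{f_R}{p} = b^{R/p}$ when $p$ is in $[1,\infty)$ and $\bignorm{f_R}{\infty} = 1$.

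First I would unwind the definition of $\kappa$ in \eqref{f: kappa}: the supremum is attained by the smallest triangle containing both $x$ and $y$, which has vertex $v := x \wedge y$ and height $n := h(v) - \min(h(x),h(y))$, so $\kappa(x,y) = (b-1)/(b^{n+1} - 1)$.

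Next, for $x, y$ in $s^R(o)$ we have $h(x) = h(y)$, hence $n$ coincides with $l := d(x, x \wedge y)$.  The case $y = x$ contributes $\kappa(x,x) = 1$; when $y \neq x$, $l$ ranges over $\{1,\ldots,R\}$, and the number of $y$ in $s^R(o)$ with $d(x, x \wedge y) = l$ equals exactly $(b-1) b^{l-1}$, since $p^l(x)$ has $b$ successors, one of which contains $x$ while each of the remaining $b-1$ supports $b^{l-1}$ points at depth $l$.  Summing, using $b^{l+1} - 1 < b^{l+1}$, and noting that $(b-1)^2/b^2 \geq 1/4$ whenever $b \geq 2$, we obtain
\[
\cK f_R(x)
= 1 + \sum_{l=1}^{R} \frac{(b-1)^2\, b^{l-1}}{b^{l+1}-1}
\geq 1 + \frac{R}{4}
\quant x \in s^R(o).
\]

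The conclusion is then immediate: since $\cK f_R \geq (1 + R/4)\,\One_{s^R(o)}$ pointwise, we have $\bignorm{\cK f_R}{p} \geq (1 + R/4)\, \bignorm{f_R}{p}$ for every $p$ in $[1,\infty]$, and $R$ may be taken arbitrarily large.  There is no serious obstacle; the only nontrivial step is identifying the right test function, a choice dictated by the observation that $\kappa(x,y)$ is bounded below by a positive constant along cousins $y$ of $x$ in a common horocycle, so that concentrating a function on a single horocycle forces $\cK$ to amplify its $L^p$-mass by a factor proportional to the diameter of the support.
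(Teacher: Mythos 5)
Your proposal is correct and follows essentially the same route as the paper: the paper tests $\cK$ on $\One_{E_n}$ with $E_n = s^n(p^n(o))$ (the base of a triangle of height $n$), counts $(b-1)b^{j-1}$ points of $E_n$ at confluent-depth $j$ from a given $x$, and obtains the same linear-in-height lower bound $\cK\One_{E_n} \geq \frac{b-1}{2b}\,n$ on $E_n$, which settles all $p$ in $[1,\infty]$ at once. The only cosmetic difference is that you evaluate $\kappa$ exactly via the volume formula $\frac{b^{n+1}-1}{b-1}$, whereas the paper uses the cruder bound $\kappa(x,y)\geq \frac12 b^{-\eta(x,y)}$ from Lemma~\ref{l: triangles and bases}; both yield the same conclusion.
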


\begin{proof}
	It is straightforward to check that the smallest triangle that contains two points $x$ and $y$ is the triangle with vertex $x\wedge y$
	(see the proof of Theorem~\ref{t: main cB}~\rmii\ for the notation) and height 
	$$
	\eta(x,y)
	:= \max \big(d(x,x\wedge y), d(y,x\wedge y)\big).
	$$   
	Clearly $\ds \eta(x,y) = \frac{1}{2} \, \big[d(x,y) + \mod{h(x)-h(y)}\big]$.  
	From the definition of~$\kappa$ (see \eqref{f: kappa}) and Lemma~\ref{l: triangles and bases}~\rmii\ we deduce that 
	$$
	\kappa(x,y)
	= \frac{1}{\bigmod{T_{x,y}}} \geq \frac{1}{2\,\bigmod{\be\big(T_{x,y}\big)}} \geq \frac{1}{2}\,\,  b^{-\eta(x,y)}
	\quant x,y\in \fT_b.
	$$
	Suppose that $o$ is a point in $\fH_0$, and consider, for each positive integer~$n$, the set $E_n := s^n(p^n(o))$, which is
	the base of the triangle with vertex $p^n(o)$ and height $n$.  Observe that for every $x$ and $y$ in $E_n$ we have $\eta(x,y) = d(x,y)/2$, so that
	$$
    	\cK \One_{E_n}(x)
	\geq \frac{1}{2}\, \int_{E_n} b^{-d(x,y)/2} \wrt \mu(y).
	$$
	Note that for every positive integer $j\leq n$ there are exactly $(b-1)\,b^{j-1}$ points in~$E_n$ at distance $2j$ from $x$. 
	Therefore the last integral can be rewritten as 
	$$
	1+\frac{b-1}{b}\, \sum_{j=1}^n b^{-j}\, b^j 
	= 1+\frac{b-1}{b}\,n.  
	$$
	Altogether  
	$$
		\cK \One_{E_n}(x) \geq \frac{b-1}{2b} \, n
		\quant x \in E_n,   
	$$
	from which the desired result for $p=\infty$ follows directly.  

	Now, set $C_b := (b-1)/(2b)$, and observe that if $p<\infty$, then for every positive integer $n$ the previous inequality yields
	$$
	\bignormto{\cK \One_{E_n}}{p}{p} 
	\geq C_b^p  \, n^p \, |E_n| 
	=    C_b^p  \, n^p \bignormto{\One_{E_n}}{p}{p}, 
	$$
	which implies that $\cK$ is unbounded on $\lp{\fT_b}$, as required.
\end{proof} 

It is worth observing that replacing triangles with appropriate slightly larger sets in the definition of $\cT$ and $\cU$ may yield significant modifications 
of the boundedness properties of the corresponding maximal operators, as we presently show. This is a further example that illustrates how sensitive are
maximal operators to the shape of the sets with respect to which we take averages.   

For every nonnegative integer $r$ consider the \textit{modified triangle} $T'_r(x) := T_r(x)\cup p^r(x)$, and the corresponding centred 
and uncentred maximal operators
$$
\cT' f(x)=\sup_{r \in \BN} \frac{1}{|T'_r(x)|}\int_{T'_r(x)} |f| \, \wrt \mu
\quad\hbox{and}\quad 
\cU'f(x)=\sup_{T' \ni x} \frac{1}{|T'|}\int_{T'} |f| \, \wrt \mu,
$$
where $T'$ is any modified triangle containing $x$.  We emphasize that $T'$ is obtained from a triangle $T$ by adjoining just a point at distance 
$h(T)$ from the vertex of $T$.  Observe that if there exists a positive constant~$C$ such that $\bigmod{T_r(x)}\geq C \, \bigmod{B_r(x)}$ 
for every triangle $T_r(x)$ in $\fT$, then 
\begin{equation} \label{f: modified max}
\cT' \leq C \, \cM
\qquad\hbox{and}\qquad
\cU' \leq C \, \cN.  
\end{equation}
For instance, this happens if $\fT = \fT_b$, or $\fT=\fS_{a,b}$ and $a\leq b<a^2$: here $\fS_{a,b}$ denotes 
the tree such that each vertex has either $a+1$ or $b+1$ neighbours according to the fact that its height is $< 1$ or $\geq  1$. 
We refer the reader to \cite{LMSV} for more on~$\fS_{a,b}$.
For each pair $a,b$ of positive integers, we denote the number $\log_a b$ by~$\tau$.
In the next proposition we show that there are trees where $\cT'$ and $\cU'$ have different boundedness properties than $\cT$ and $\cU$,
respectively.

\begin{proposition}\label{counterexamples} 
	The following hold:
	\begin{enumerate}
		\item[\itemno1]
			the operator $\cT'$ is bounded on $\lp{\fT_b}$ for every $p$ in $(1,\infty]$, and unbounded on $\lu{ \fT_b}$; 
		\item[\itemno2]
			if $a<b<a^2$, then $\cT'$ is bounded $L^p(\fS_{a,b})$ for $p>\tau$ and it is unbounded on $L^p(\fS_{a,b})$ for $p<\tau$;
		\item[\itemno3]
			the operator $\cU'$ is bounded on $\lp{\fT_b}$ if and only if $p > 2$.
	\end{enumerate}
\end{proposition}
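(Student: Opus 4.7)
The plan is to derive the bounded ranges of \rmi, \rmii\ and \rmiii\ from the pointwise inequalities \eqref{f: modified max}, combined with the known bounds for $\cM$ and~$\cN$, and to establish the unbounded ranges by testing $\cT'$ or $\cU'$ on a Dirac mass $\de_o$. The crucial point for the negative results is that the extra ancestor $p^h(x)$ in the definition of $T'_h(x):=T_h(x)\cup\{p^h(x)\}$ allows a single point~$o$ to serve as the ``tip'' of many different modified triangles, so that the mass of $\de_o$ is propagated to every descendant of $o$.

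For \rmi, I will use the pointwise bound $\cT'\leq C\,\cM$ from \eqref{f: modified max} on $\fT_b$ together with the theorem of \cite{NT} and \cite{CMS1} to obtain the boundedness of $\cT'$ on $\lp{\fT_b}$ for every $p$ in $(1,\infty]$. For the $L^1$ unboundedness, fix $o$ in $\fT_b$: for every positive integer $r$ and every $x$ in $s^r(o)$ one has $p^r(x)=o$, so
$$
\cT'\de_o(x)
\geq \frac{1}{|T_r(x)|+1}
=    \frac{b-1}{b^{r+1}+b-2}.
$$
Since $|s^r(o)|=b^r$ and the sets $\{s^r(o):r\geq 1\}$ are disjoint, this yields
$$
\bignorm{\cT'\de_o}{1}
\geq \sum_{r\geq 1}\frac{(b-1)\,b^r}{b^{r+1}+b-2}
= +\infty,
$$
while $\norm{\de_o}{1}=1$.

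For \rmii, the hypothesis $a\leq b<a^2$ allows one to apply \eqref{f: modified max} on $\fS_{a,b}$, so the LMSV bound for $\cM$ implies that $\cT'$ is bounded on $\lp{\fS_{a,b}}$ for every $p>\tau$. For the unbounded range I will test $\cT'\de_o$ with $o$ at height $k$, $k$ large. Because every intermediate ancestor has height $\geq 1$ (hence $b$ children), $|s^k(o)|=b^k$ and this set sits on the horocycle $\fH_0$. For each $x$ in $s^k(o)$ the triangle $T_k(x)$ lies entirely in the $a$-region $\{y:h(y)\leq 0\}$, so $|T_k(x)|=(a^{k+1}-1)/(a-1)$. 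Since $p^k(x)=o$ one gets
$$
\bignormto{\cT'\de_o}{p}{p}
\geq \frac{|s^k(o)|}{(|T_k(x)|+1)^p}
\geq C_{a,p}\,\Big(\frac{b}{a^p}\Big)^k,
$$
which diverges as $k\to\infty$ whenever $a^p<b$, i.e.\ $p<\tau$.

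For \rmiii, the pointwise inequality $\cU'\leq C\,\cN$ on $\fT_b$ and Veca's theorem give the boundedness of $\cU'$ on $\lp{\fT_b}$ for $p>2$. For $p\leq 2$ I plan to test $\cU'\de_o$ once more. For every positive integer $h$ and every $y$ in $s^h(o)$ the set $T'_h(y)=T_h(y)\cup\{o\}$ has $(b^{h+1}-1)/(b-1)+1$ points and contains $o$, so every $x$ in $T_h(y)$ satisfies $\cU'\de_o(x)\geq c_b\,b^{-h}$. Given a descendant $x$ of $o$ at distance $d\geq 1$, the admissible heights are $\lceil d/2\rceil\leq h\leq d$; the smallest admissible value gives $\cU'\de_o(x)\geq c_b\,b^{-d/2}$. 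Since $o$ has exactly $b^d$ descendants at distance $d$,
$$
\bignormto{\cU'\de_o}{p}{p}
\geq c_b^p\sum_{d\geq 1}b^d\cdot b^{-dp/2}
=    c_b^p\sum_{d\geq 1}b^{d(1-p/2)}
= +\infty \qquad\hbox{for every }p\leq 2.
$$
The main obstacle throughout is recognising how the extra ancestor tip of $T'_h$ creates, for each single descendant of~$o$, roughly $d/2$ different modified triangles having $o$ as tip. This is precisely the overlap that defeats the analogue of \eqref{f: CF inequality} (as anticipated at the end of Section~\ref{s: The uncentred}) and which accounts for the endpoint breakdown in both \rmi\ and \rmiii.
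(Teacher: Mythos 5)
Your proposal is correct and follows essentially the same route as the paper: the positive ranges come from the pointwise domination \eqref{f: modified max} together with the cited results for $\cM$ and $\cN$, and each negative result is obtained by testing on a Dirac mass placed at the tip of many modified triangles (the paper uses $\de_{p^n(o)}$ on $\fS_{a,b}$ where you move $o$ to height $k$, and restricts to even distances in part \rmiii\ where you take $h=\lceil d/2\rceil$, but these are only cosmetic differences). The counting of descendants and triangle volumes in all three parts matches the paper's computations.
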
 

\begin{proof}
	The $L^p$ boundedness of $\cT'$ and $\cU'$ in the ranges described in \rmi-\rmiii\ above follow from the bounds \eqref{f: modified max} 
	and the positive results for $\cM$ and $\cN$ proved in \cite{NT,CMS1,LMSV,V}.

	Next we prove that $\cT'$ is unbounded on $\lu{\fT_b}$.  Fix a point $o$ in $\fH_0$, and consider the set $E := \{x \in \fT_b: o \succeq x\}$.  
	Clearly $E$ is the infinite triangle with vertex $o$.  It is straightforward to check that for each $x$ in $E$ 
	$$
    	\cT'\delta_o(x)
	= \frac{1}{\bigmod{T'_{\mod{x}}(x)}}. 
        $$
	By Lemma~\ref{l: triangles and bases}~\rmii, $\bigmod{T'_{\mod{x}}(x)} = \bigmod{T_{\mod{x}}(x)} +1 \leq 2\, b^{|x|} +1$, so that
	$$
	\bignorm{\cT'\de_o}{\lu{\fT_b}} 
	\geq  \int_E \cT'\de_o \wrt \mu 
	\geq  \sum_{j=0}^\infty \, \, \int_{E\cap \fH_{-j}} \frac{1}{2b^{j}+1} \wrt \mu.
    	$$
	Since $\bigmod{E\cap \fH_{-j}} = b^j$, the series above is not convergent, and the unboundedness of $\cT'$ on $\lu{\fT_b}$ follows, thereby 
	completing the proof of \rmi.

	To complete the proof of \rmii, fix a point $o$ in $\fH_0$, and for each positive integer $n$ consider the set 
	$E_n := s^n\big(p^n(o)\big)$, which is a subset of the horocycle~$\fH_0$ in $\fS_{a,b}$.  By Lemma~\ref{l: triangles and bases}~\rmii, and
	the fact that each vertex with nonpositive height has exactly $a$ successors, 
	$$
	\cT' \delta_{p^n(o)}(x)
	= \frac{1}{|T'_n(x)|}
	\geq \frac{1}{2 \, a^{n} +1 } 
	\quant x\in E_n,
    	$$
	whence, much as above, 
	$$
	\bignormto{\cT' \delta_{p^n(o)}}{\lp{\fS_{a,b}}}{p} 
	\geq \int_{E_n} \!\!\cT' \delta_{p^n(o)}^p \wrt \mu 
	\geq \frac{\bigmod{E_n}}{(2a^{n} +1)^p}.
	$$
	Observe that $\bigmod{E_n} = b^n = a^{\tau n}$.  Altogether, we see that 
	$$
	\bignormto{\cT' \delta_{p^n(o)}}{\lp{\fS_{a,b}}}{p} 
	\geq \frac{a^{\tau n}}{(2a^{n} +1)^p}.
	$$
	Since, by assumption, $p<\tau$, the right hand side above cannot be bounded with respect to $n$, and the desired result follows.  

	Finally we complete the proof of \rmiii\ by showing that $\cU'$ is unbounded on $\lp{\fT_b}$ for every $p\leq 2$.  Let 
	$E := \{x \in \fT_b: o \succeq x\}$.  If $x\in E$ and $d(o,x)$ is even, then 
	$$
	\cU' \delta_o(x) 
	= \frac{1}{\bigmod{T_{\mod{x}/2}'\big(p^{\mod{x}/2}(x)\big)}},  
	$$ 
	and Lemma~\ref{l: triangles and bases}~\rmii\ implies that $\bigmod{T_{\mod{x}/2}'\big(p^{\mod{x}/2}(x)\big)}\leq 2\, b^{|x|/2}+1$.  Thus, 
	$$
	\bignormto{\cU' \de_o}{\lp{\fT_b}}{p} 
	\geq \sum_{j=0}^\infty \,\int_{E_n\cap \fH_{-2j}}\!\! (2b^j+1)^{-p} \wrt \mu 
	=    \sum_{j=0}^\infty \,b^{2j}\, (2b^j+1)^{-p} .
	$$
	The required conclusion follows from the fact that for every $p\leq 2$ the series above is not convergent.
\end{proof}

\begin{remark}\label{rem: counterex infinite}
Finally, we present an example of a tree $\fT$ with unbounded geometry, where~$\cT'$ and, \textit{a fortiori} $\cU'$, is unbounded on $L^p$ for every $p<\infty$.

Let $\fT$ be the tree characterised by the property that each vertex off $\fH_0$ has three neighbours, and $\nu(x_j)=j+2$ where $\{x_j: j \ge 1\}$ 
is an enumeration of the points of $\fH_0$.   

Notice that for every $j \ge 1$
$$
	\cT'\de_{x_j}(y)
	= \frac{1}{|T_1'(y)|}
	= \frac{1}{4}
	\quant y\in s^1(x_j).  
$$
Therefore
$$
	\bignormto{\cT'\de_{x_j}}{p}{p} 
	\ge \sum_{y \in s^1(x_j)}  \!\! \cT'\de_{x_j}(y)^p
	=   \frac{1}{4^p} \, \mod{s^1(x_j)}
	=   \frac{j+2}{4^p}.
$$
Since $\|\de_{x_j}\|_p=1$, the operator norm of $\cT'$ on $\lp{\fT}$ is at least $(j+2)^{1/p}/4$.  By letting $j$ vary we obtain the required conclusion. 

Since $\cU' \ge \cT'$ pointwise, $\cU'$ is unbounded on $L^p(\fT)$ for every $p \in [1,\infty)$. 
\end{remark}

It is straightforward to check that for each $r>1$ there is no constant $C$ such that 
\begin{equation} \label{f: CF fails} 
	\Bignorm{\sum_{T'\in\cG'} \One_{T'}}{r}
	\leq C \bignorm{\One_{G'}}{r}
\end{equation}
for every finite family $\cG'$ of maximal modified triangles in $\fT$.  Here $G'$ is the union of the modified triangles in $\cG'$.

Indeed, it suffices to consider, for every positive integer $j$, the family $\cG_j'$ of the modified triangles $\{T_1'(y): y \in s^1(x_j)\}$.  
Then the $r^{\mathrm{th}}$ power of the right hand side of \eqref{f: CF fails} is equal to $C^r\, \big(3(j+2)+1\big)$, whereas the 
$r^{\mathrm{th}}$ power of left hand side is equal to $3(j+2)+(j+2)^r$.  Thus, \eqref{f: CF fails} fails for large values of~$j$.


\end{document}